\newtheorem{Theorem}{Theorem}[section]
\newtheorem{Lemma}[Theorem]{Lemma}
\newtheorem{Remark}[Theorem]{Remark}
\newtheorem*{theorem-a}{Moser, 1971}
\newtheorem*{theorem-b}{Lions, 1985}
\begin{document}

\title{Ground state solution for a Kirchhoff problem with
exponential critical growth
\thanks{Research partially supported by the National Institute of Science and Technology of
Mathematics INCT-Mat, CNPq grants 308339/2010-0 and 454749/2011-2.}}
\author{Giovany M. Figueiredo \\ Universidade Federal do Par\'a,
Faculdade de Matem\'atica, \\ CEP 66075-110, Bel\'em-PA, Brasil  \\
\textsf{giovany@ufpa.br}
 \\ \\ Uberlandio B. Severo \\ Universidade Federal do Para\'{\i}ba, Departamento de Matem\'atica, \\
 CEP 58051-900, Jo\~ao Pessoa-PB, Brasil \\
 \textsf{uberlandio@mat.ufpb.br} }

\date{}

\maketitle{}

\numberwithin{equation}{section}

\begin{abstract}
We establish the existence of a positive ground state solution for
a Kirchhoff problem in $\mathbb{R}^2$ involving critical
exponential growth, that is, the nonlinearity behaves like
$\exp(\alpha_{0}s^{2})$ as $|s| \to \infty$, for some
$\alpha_{0}>0$. In order to obtain our existence result we used
minimax techniques combined with the Trudinger-Moser inequality.
\end{abstract}

\bigskip

\noindent{\small\emph{2010 Mathematics Subject Classification:}
35J20, 35J25, 35J60, 35Q60.

\noindent\emph{Keywords and phrases:} Kirchhoff problem,
exponential critical growth, ground state solution.}

\section{Introduction}
This work is concerned with the existence of a positive ground
state solution for a \textit{nonlocal Kirchhoff problem} of the
type
\begin{equation}\label{P}
\left\{
\begin{aligned}
-m(\|u\|^{2})\Delta u &= f(x,u)&\mbox{in}&\quad \Omega, \\
u&=0\quad &\mbox{on}&\quad\partial\Omega,
\end{aligned}
 \right.
\tag{$P$}
\end{equation}
where $\Omega$ is a smooth bounded domain in $\mathbb{R}^2$,
$\|u\|^{2}:=\int_{\Omega}|\nabla u |^{2}\textrm{d}x$ is the norm of
the gradient in $W^{1,2}_0(\Omega)$, $m:\mathbb{R}_{+}\rightarrow
\mathbb{R}_+$ and $f:\Omega\times\mathbb{R}\rightarrow \mathbb{R}$
are continuous functions that satisfy some appropriate conditions
and
they will be stated later on.\\

Problem \eqref{P} is called \textit{nonlocal} because of the term
$m(\|u\|^{2})$ which implies that the equation in \eqref{P} is no
longer a pointwise identity. As we will see later the presence of
the term $m(\|u\|^{2})$ provokes some mathematical difficulties
which makes the study of such a class of problems particularly
interesting. Moreover, equation \eqref{P} has a physical appeal.
The main motivation to study problem \eqref{P} is due to the work
of Kirchhoff \cite{Kirchhoff} in which, in 1883, he studied the
hyperbolic equation
\begin{equation}\label{parabolica}
\rho \frac{\partial^2u}{\partial
t^2}-\left(\frac{P_0}{h}+\frac{E}{2L}\int_0^L\left|\frac{\partial
u}{\partial
x}\right|^2\textrm{d}x\right)\frac{\partial^2u}{\partial x^2}=0,
\end{equation}
that extends the classical D'Alembert wave equation, by
considering the effects of the changes in the length of the
strings during the vibrations. The parameters in equation
\eqref{parabolica} have the following meanings: $L$ is the length
of the string, $h$ is the area of cross-section, $E$ is the Young
modulus of the material, $\rho$ is the mass density and $P_0$ is
the initial tension. In fact, \eqref{P} can be seen as a
stationary version of the following evolution problem:
\[
\left\{
\begin{aligned}
\frac{\partial^2u}{\partial
t^2}-m(\|u\|^{2})\Delta u &= f(x,u)&\mbox{in}&\quad \Omega\times (0,T), \\
u&=0\quad &\mbox{on}&\quad\partial\Omega\times (0,T),\\
u(x,0)&=u_0(x)&\mbox{in}&\quad \Omega,\\
\frac{\partial u}{\partial t}(x,0)&=u_1(x)&\mbox{in}&\quad \Omega,
\end{aligned}
 \right.
\]
which have called the attention of several researchers mainly
after the work of Lions \cite{JL Lions}, where a functional
analysis approach was proposed to study it. We mention that
nonlocal problems also appear in other fields, for example,
biological systems where the function $u$ describes a process
which depends on the average of itself (for example, population
density), see for instance \cite{coalves1, coalves2}
and its references.\\

In this paper, we are also interested in a borderline case of the
Sobolev imbedding theorems, commonly known as the Trudinger-Moser
case. When $n=2$, clearly the Sobolev exponent $2^*$ becomes
infinite and $W^{1,2}(\Omega) \hookrightarrow L^q(\Omega)$ for
$1\leq q < \infty$ but $W^{1,2}(\Omega) \not \hookrightarrow
L^\infty(\Omega)$. To fill this gap, at least in the case where
$\Omega$ is a \textit{bounded domain}, using the Dirichlet norm
$\|\nabla u\|_{2}$ (equivalent to the Sobolev norm in
$W_0^{1,2}(\Omega)$) and replacing the target Lebesgue space by an
Orlicz space, N.~Trudinger \cite{Trudinger}  proved that there
exists $\alpha>0$ such that $W_0^{1,2}(\Omega)$ is embedded into
the Orlicz space $L_{\phi_{\alpha}}(\Omega)$ determined by the
Young function $\phi_{\alpha}(t)=\exp(\alpha t^2)-1$. This result
had many generalizations, extensions and applications in recent
years. In the first direction, it was sharpened
by J.~Moser \cite{Moser71}, who found the best exponent $\alpha$
and in particular he proved the following result:
\begin{theorem-a}
There exists a constant $C>0$ so that
\begin{equation}\label{PTM1}
\sup_{u \in W^{1,2}_0(\Omega)\,:\, \|\nabla u\|_{2}\leq
1}\int_{\Omega} \exp(\alpha
 u^2) \textrm{d}x \leq C |\Omega|,\quad \forall \; \alpha \leq 4\pi.
\end{equation}
Moreover, $4\pi$ is the best constant, that is, the supremum in
\eqref{PTM1} is $+\infty$ if $\alpha > 4\pi$.
\end{theorem-a}
Estimate \eqref{PTM1} is now referred as \textit{Trudinger-Moser
inequality} and plays an important role in geometric analysis and
partial differential equations.

On the crucial question of compactness for the imbedding
$W^{1,2}_0(\Omega) \hookrightarrow L_{\phi_{\alpha}}(\Omega)$ with
$\phi_{\alpha}(t)=\exp(\alpha t^2)-1$, P.~-L.~Lions \cite{PL
Lions} proved that except for ``small weak neighborhoods of $0$''
the imbedding is compact and the best constant $4\pi$ may be
improved in a certain sense. More specifically, among other
results, P.~-L.~Lions proved the following:
\begin{theorem-b}
Let $(u_k)$ be a sequence of functions in $W^{1,2}_0(\Omega)$ with
$\|\nabla u_k\|_2 = 1$ such that $u_k \rightharpoonup u \not
\equiv 0$ weakly in $W_0^{1,2}(\Omega)$. Then for  any $0<p <
4\pi/(1 -\|\nabla u\|_2^2)$ we have
\begin{equation}\label{Lions-1}
\sup_{k} \int_\Omega \exp(p u_k^2) \; \textrm{d}x < \infty .
\end{equation}
\end{theorem-b}
It is clear that this result gives more precise information than
\eqref{PTM1} when $u_k \rightharpoonup u$ weakly in
$W^{1,2}_0(\Omega)$ with $u\not\equiv0$ and it will be crucial to
prove our main result. In this context, we are concerned about the
existence of solution for (\ref{P}) when the nonlinearity $f(x,s)$
has the maximal growth on $s$ for which the functional
$\Phi(u):=\int_\Omega F(x,u)\ \textrm{d}x$, where
$F(x,s)=\int^{s}_{0}f(x,t)\textrm{d}t$, can be studied on the
$W^{1,2}_0(\Omega)-$setting. To be more precise, following the
lines of \cite{adimurthi,djairo,djairo1} and motivated by the
Trudinger-Moser inequality (\ref{PTM1}), we say that $f(x,s)$ has
\textit{exponential subcritical growth} at $+\infty$ if
$\lim_{s\rightarrow +\infty}f(x,s)\exp(-\alpha s^2)=0$ for any
$\alpha>0$ and $f(x,s)$ has \textit{exponential critical growth}
at $+\infty$, if there is $\alpha_{0}>0$ such that
$$
\displaystyle\lim_{s\rightarrow +\infty}f(x,s)\exp(-\alpha s^{2})=
\left\{
\begin{aligned}
0,&\quad \forall\,\alpha > \alpha_{0},\\
+\infty,&\quad \forall\,\alpha < \alpha_{0},
\end{aligned}
 \right.\leqno{(c)_{\alpha_{0}}}
$$
uniformly in $x\in\Omega$. We will restrict our discussion for the
case that $f(x,s)$ has exponential critical growth which is more
involved. \\

For ease of reference we state our assumptions on $m$ and $f$ in a
more precise way. For this, we define
$M(t)=\int_0^tm(s)\textrm{d}s$, the primitive of $m$ so that
$M(0)=0$. The hypotheses on the function
$m:\mathbb{R}_{+}\rightarrow \mathbb{R}_+$ are the following:
\begin{itemize}
    \item [$(M_1)$] there exists $m_{0}>0$ such that $m(t) \geq m_{0}$ for all $t\geq
    0$ and
    $$
M(t+s)\geq M(t)+M(s)\quad \forall\, s,t\geq0;
    $$

    \item [$(M_2)$] there exists constants $a_1, a_2>0$ and $t_0>0$ such that for some $\sigma\in \mathbb{R}$
    $$
m(t)\leq a_1+a_2 t^{\sigma},\,\,\, \forall\, t\geq t_0;
    $$
    \item [$(M_3)$] $\dfrac{m(t)}{t}$ is nonincreasing for $t>0$.
\end{itemize}

Note that condition $(M_1)$ is valid whenever $m(0):=m_0>0$ and
$m$ is nondecreasing. A typical example of a function $m$
satisfying the conditions $(M_{1})-(M_{3})$ is given by
$$
m(t)= m_{0} + at,
$$
where $m_{0}>0$ and $a\geq 0$, which is the model considered in
the original Kirchhoff equation \eqref{parabolica}. An another
example is $m(t)=1+\ln(1+t)$.

As a consequence of $(M_3)$ (see proof of Lemma \ref{crescente}),
a straightforward computation shows that
\begin{itemize}
    \item [$(\widehat{M}_3)$] $\displaystyle\frac{1}{2}M(t)-\frac{1}{4}m(t)t$
    is nondecreasing for $t\geq 0$.
\end{itemize}
In particular, one has
\begin{equation}\label{conseq1}
\frac{1}{2}M(t)-\frac{1}{4}m(t)t\geq 0,\,\,\,\forall\ t\geq 0.
\end{equation}

Here, we also require that $f:\Omega\times \mathbb{R}\rightarrow
\mathbb{R}$ is continuous. Since we intend to find positive
solutions, in all this paper let us assume that $f(x,s)=0$ for
$x\in\Omega$ and for $s\leq 0$. Moreover, $f$ satisfies
$(c)_{\alpha_0}$ and the following conditions:
\begin{itemize}
    \item [$(f_1)$] there exist constants $s_0, K_0>0$ such that
$$
F(x,s)\leq
K_0f(x,s)\quad\forall\,(x,s)\in\Omega\times[s_0,+\infty);
$$
    \item [$(f_2)$] for each $x\in \Omega$, $\dfrac{f(x,s)}{s^3}$ is
    increasing for $s>0$;

    \item [$(f_3)$] there exists
    $\beta_{0}>\dfrac{2}{\alpha_{0}d^{2}}m(4\pi/\alpha_{0})$
    such that
$$
\displaystyle\lim_{s\rightarrow
+\infty}\displaystyle\frac{sf(x,s)}{\exp(\alpha_{0}s^{2})}\geq
\beta_0,\quad \textrm{uniformly in }x\in \Omega,
$$
where $d$ is the radius of the largest open ball contained in
$\Omega$.
\end{itemize}

\medskip

\noindent We observe that condition $(f_1)$ implies
\[
 F(x,s)\geq F(x,s_0)\exp[K_0(s-s_0)] \quad
\forall\; (x,s) \in \mathbb{R}^2\times[s_0,+\infty),
\]
which is reasonable for functions $f(x,s)$ behaving as
$\exp(\alpha_0 s^2)$ at infinity. Moreover, from $(f_{1})$, for
each $\theta>0$ there exists $R_\theta>0$ satisfying
\begin{equation}\label{AR}
\theta F(x,s) \leq sf(x,s),
\quad\forall\,(x,s)\in\Omega\times[R_\theta,+\infty).
\end{equation}
We also have that condition $(f_2)$ implies that
\begin{equation}\label{origem}
\lim_{s\rightarrow 0^+}\frac{f(x,s)}{s^\mu}=0\quad
\textrm{uniformly in}\,\, x\in \Omega
\end{equation}
provided that $\mu\in[0,3)$. In particular, we have $f(x,0)=0$ for
each $x\in \Omega$.
As we will see later on, hypothesis $(f_3)$ is necessary to obtain
precise information about the minimax level of the energy
functional associated to problem (\ref{P}).

Generally, the main difficulty encountered in nonlocal Kirchhoff
problems is the competition that there is between the growths of
$m$ and $f$. To overcome this trouble, the authors usually assume
that $m$ is increasing or bounded, as we can see in
\cite{coalves1, coalves3, Bitao, He, Figueiredo2, Li, Liu, Wang}.
We point out that in our arguments we do not suppose that $m$ is
increasing and not bounded above. This allows us to consider the
case $m(t)\equiv 1$ that corresponds to the Dirichlet problem
\[
\left\{
\begin{aligned}
-\Delta u &= f(x,u)&\mbox{in}&\quad \Omega, \\
u&=0\quad &\mbox{on}&\quad\partial\Omega.
\end{aligned}
 \right.
\]
Furthermore, for the authors knowledge, at the present time, there
is no nonlocal problem involving critical growth of
Trudinger-Moser type. For problems related to \eqref{P}, involving
critical growth in the Sobolev case, we refer the papers
\cite{coalves3, giovany, Wang}.\\

We say that $u\in W^{1,2}_0(\Omega)$ is a weak solution of
(\ref{P}) if holds
\[
m(\|u\|^{2})\int_{\Omega}\nabla u \nabla \phi \ \textrm{d}x
=\int_{\Omega} f(x,u)\phi \ \textrm{d}x,\quad \forall\, \phi\in
W^{1,2}_0(\Omega).
\]
Since $f(x,0)=0$, $u\equiv 0$ is the trivial solution for
(\ref{P}). Thus, our aim is to obtain a nontrivial solution. The
term \textit{ground state} refers to minimizers of the
corresponding energy within the set of nontrivial solutions (see
Section 2). Now, the main result of this work can state as
follows.

\noindent\begin{Theorem}\label{teorema1} Suppose
$(M_{1})-(M_{3})$, $(c)_{\alpha_{0}}$ and $(f_{1})-(f_{3})$ are
satisfied. Then, problem (\ref{P}) has a positive ground state
solution.
\end{Theorem}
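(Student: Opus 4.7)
The plan is to apply the mountain pass theorem to the energy functional
\[
I(u) = \tfrac{1}{2}M(\|u\|^{2}) - \int_{\Omega} F(x,u)\,\textrm{d}x,
\]
which is well defined and $C^{1}$ on $W^{1,2}_{0}(\Omega)$ thanks to $(M_{2})$, $(c)_{\alpha_{0}}$ and the Trudinger--Moser inequality. First I would verify the standard mountain pass geometry: the estimate $(\ref{origem})$ combined with $(M_{1})$ gives $I(u)\ge (m_{0}/2)\|u\|^{2}-o(\|u\|^{2})-C\|u\|^{q}\int e^{\alpha u^{2}}$, so picking $\|u\|$ small enough one obtains $I(u)\ge \rho>0$ on a small sphere. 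On the other hand, $(f_{1})$ (through the consequence $F(x,s)\ge c_{1}e^{K_{0}s}-c_{2}$) together with the polynomial bound $(M_{2})$ on $M$ forces $I(tu)\to-\infty$ along any fixed nonnegative $u$. Denote the mountain pass level by $c$; we get a Cerami (or Palais--Smale) sequence $(u_{n})$ at level $c$. Using $(\widehat{M}_{3})$ and the Ambrosetti--Rabinowitz type consequence \eqref{AR} of $(f_{1})$, the standard computation with $I(u_{n})-\tfrac{1}{4}\langle I'(u_{n}),u_{n}\rangle$ gives boundedness of $(u_{n})$ in $W^{1,2}_{0}(\Omega)$.

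The core of the argument is a sharp upper estimate on the minimax level. I would use the Moser sequence
\[
\widetilde{w}_{n}(x)=\frac{1}{\sqrt{2\pi}}\begin{cases}\sqrt{\log n},& |x|\le d/n,\\ \log(d/|x|)/\sqrt{\log n}, & d/n\le|x|\le d,\\ 0,&|x|\ge d,\end{cases}
\]
translated to a ball of radius $d$ inside $\Omega$, so that $\|\widetilde{w}_{n}\|^{2}=1$. Assumption $(f_{3})$, together with the explicit concentration of $\widetilde{w}_{n}$, allows one to show, by the usual Moser type computation, that
\[
\max_{t\ge 0} I(t\widetilde{w}_{n}) < \tfrac{1}{2}M\!\left(\tfrac{4\pi}{\alpha_{0}}\right)
\]
for some $n$ large; the threshold on $\beta_{0}$ in $(f_{3})$ is exactly what is needed to absorb the logarithmic loss. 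This yields $0<c<\tfrac{1}{2}M(4\pi/\alpha_{0})$.

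The main obstacle is recovering compactness at this level under exponential critical growth. Extracting a weak limit $u_{n}\rightharpoonup u$ in $W^{1,2}_{0}(\Omega)$, with $u_{n}\to u$ a.e. and strongly in every $L^{q}$, the delicate point is to pass to the limit in $\int f(x,u_{n})u_{n}$ and $\int F(x,u_{n})$. Here I would exploit Lions' inequality (Theorem above) applied to $u_{n}/\|u_{n}\|$ and the bound
\[
\|u_{n}\|^{2}\le \tfrac{2}{m_{0}}\,I(u_{n}) + \tfrac{2}{m_{0}}\int_{\Omega}F(x,u_{n})\,\textrm{d}x
\]
to show that $\alpha_{0}u_{n}^{2}$ stays below $4\pi/(1-\|\nabla u\|_{2}^{2})$ (after normalization), whence the family $\{e^{p u_{n}^{2}}\}$ is bounded in some $L^{r}$, $r>1$. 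First one rules out $u\equiv 0$: if $u=0$ then $\int F(x,u_{n})\to 0$, so $\tfrac{1}{2}M(\|u_{n}\|^{2})\to c$, giving $\|u_{n}\|^{2}\to t^{*}$ with $\tfrac{1}{2}M(t^{*})=c<\tfrac{1}{2}M(4\pi/\alpha_{0})$; since $M$ is nondecreasing this forces $t^{*}<4\pi/\alpha_{0}$, and then Lions--Moser gives the higher integrability needed to deduce $\int f(x,u_{n})u_{n}\to 0$, contradicting $\langle I'(u_{n}),u_{n}\rangle\to 0$ in view of $(M_{1})$. Hence $u\not\equiv 0$ and one can pass to the limit, obtaining a nontrivial weak solution.

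Finally, to identify a \emph{ground state} I would work on the Nehari manifold
\[
\mathcal{N}=\{u\in W^{1,2}_{0}(\Omega)\setminus\{0\}:\langle I'(u),u\rangle=0\},
\]
and show, using $(f_{2})$ and $(M_{3})$, that for every $u\not\equiv 0$ the map $t\mapsto I(tu)$ has a unique maximum on $(0,\infty)$, attained at a point $t(u)>0$ with $t(u)u\in\mathcal{N}$; this is where $(M_{3})$ and $(f_{2})$ jointly ensure strict monotonicity of $\langle I'(tu),u\rangle/t$ in a suitable sense. By the standard argument, the mountain pass level coincides with $c_{\mathcal{N}}:=\inf_{\mathcal{N}} I$, so the critical point $u$ obtained above is a ground state. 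Positivity follows from $f(x,s)=0$ for $s\le 0$: testing the weak equation with $u^{-}$ gives $\|u^{-}\|=0$, hence $u\ge 0$, and the strong maximum principle applied to $-\Delta u = f(x,u)/m(\|u\|^{2})\ge 0$ yields $u>0$ in $\Omega$. The hardest and most technical step, in my view, is the quantitative estimate on the minimax level and the subsequent elimination of $u\equiv 0$, since both require a careful interplay between $(f_{3})$, the superadditivity in $(M_{1})$, and Lions' concentration result.
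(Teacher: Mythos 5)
The overall architecture of your proposal matches the paper's: mountain pass geometry, boundedness of the Palais--Smale sequence via $(\widehat M_3)$ and \eqref{AR} with $\theta>4$, a Moser-function estimate giving $c_*<\tfrac12 M(4\pi/\alpha_0)$ with $(f_3)$ providing exactly the threshold on $\beta_0$, a Trudinger--Moser argument to rule out a vanishing weak limit, and a Nehari comparison (where $(M_3)$ and $(f_2)$ jointly give the one-maximum property along rays, so that $c_*\le \inf_{\mathcal N} I$) to identify the critical point as a ground state.

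However, there is a genuine gap at the sentence ``Hence $u\not\equiv 0$ and one can pass to the limit, obtaining a nontrivial weak solution.'' For a nonlocal problem this step is not routine, and it is precisely where the paper spends most of its effort (Assertions 2 and 3). From $I'(u_n)\to 0$, $\|u_n\|\to\rho_0$, and the $L^1$-convergence of $f(\cdot,u_n)$, what you actually obtain is
\[
m(\rho_0^2)\int_\Omega \nabla u_0\nabla v\,\textrm{d}x=\int_\Omega f(x,u_0)v\,\textrm{d}x,\qquad \forall\, v\in W^{1,2}_0(\Omega),
\]
where a priori only $\|u_0\|\le\rho_0$ is known. This is \emph{not} the weak formulation of \eqref{P} unless $\rho_0=\|u_0\|$, because the coefficient should be $m(\|u_0\|^2)$, not $m(\rho_0^2)$. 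Since $m$ need not be monotone and is evaluated at the full $W^{1,2}_0$-norm, there is no automatic mechanism that upgrades weak convergence of $u_n$ to norm convergence here. The paper closes this gap by a two-stage contradiction: first (Assertion 2) one shows $\langle I'(u_0),u_0\rangle\ge 0$, by observing that otherwise some $\sigma u_0$ with $0<\sigma<1$ would lie on $\mathcal N$ and, combining $(\widehat M_3)$, the monotonicity of $sf(x,s)-4F(x,s)$, Fatou's lemma and weak lower semicontinuity, one would get $c_*\le I(\sigma u_0)<c_*$; then (Assertion 3) one assumes $\|u_0\|<\rho_0$, uses the identity $M(\rho_0^2)=2c_*+2\int F(x,u_0)$, the level bound $c_*<\tfrac12 M(4\pi/\alpha_0)$, and the superadditivity of $M$ from $(M_1)$ to show $\rho_0^2<\dfrac{4\pi/\alpha_0}{1-\|u_0/\rho_0\|^2}$, which is exactly the window in which Lions' inequality \eqref{Lions-1} gives uniform integrability of $\exp(q\alpha u_n^2)$ and hence $\int f(x,u_n)(u_n-u_0)\to 0$; testing $I'(u_n)\to 0$ against $u_n-u_0$ then forces $\rho_0=\|u_0\|$, a contradiction. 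Your proposal invokes Lions' inequality, but only loosely and without this argument; in particular, your sketch never addresses the discrepancy between $m(\rho_0^2)$ and $m(\|u_0\|^2)$, nor the role of the superadditivity in $(M_1)$ in producing the Lions window. A second, smaller imprecision: when ruling out $u\equiv 0$ you cite ``Lions--Moser,'' but Lions' theorem requires a nonzero weak limit; the correct tool there is the plain Trudinger--Moser inequality \eqref{PTM1}, which suffices once $\alpha_0\|u_n\|^2$ is eventually below $4\pi$.
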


An example of a function $f$ satisfying the conditions in Theorem
\ref{teorema1}, with $\alpha_0=1$, is given by
$$
F(x,s)=\frac{s^4}{4}+s^2[\exp(s^{2})-1],\,\, s\geq 0.
$$
Indeed, deriving we get
$$
f(x,s)=s^3+2s[\exp(s^{2})-1]+2s^3\exp(s^{2}),
$$
from which one has $f(x,s)/s^3$ is increasing for $s>0$. A simple
computation shows that
$$
\lim_{s \rightarrow
+\infty}\frac{F(x,s)}{f(x,s)}=0\quad\textrm{and}\quad\lim_{s
\rightarrow +\infty}\frac{sf(x,s)}{\exp(s^{2})}=+\infty,
$$
uniformly in $x\in \Omega$ and so $(f_1)-(f_3)$ are satisfied.

\bigskip

The paper is organized as follows. In Section 2 we present the
variational setting in which our problem will be treated. Section
3 is devoted to show that the energy functional has the mountain
pass geometry and in Section 4 we obtain an estimate for the
minimax level associated to the our functional. Finally, we prove
Theorem \ref{teorema1} in Section 5.

Hereafter, $C$, $C_0$, $C_{1}$, $C_{2}$, ... will denote positive
(possibly different) constants. We shall use the notation
$\|\cdot\|$ for the norm of the gradient in the Sobolev space
$W^{1,2}_0(\Omega)$ and $\|\cdot\|_p$ for the norm in the Leabegue
space $L^p(\Omega)$, $1\leq p<\infty$. The abbreviation a.e.
will mean \textit{almost everywhere}.\\

\section{The variational framework} \label{var}

As we are interested in positive solution, from now on we shall
assume $f(x,s)=0$ for $x\in \Omega$ and $s\leq0$. Since $f(x,s)$
is continuous and satisfies $(c)_{\alpha_{0}}$ and (\ref{origem}),
for $\varepsilon>0$, $\alpha>\alpha_{0}$ and $q\geq 0$, there
exists $C=C(\varepsilon,\alpha, q)>0$ such that
\begin{equation}\label{estimativa 1}
|F(x,s)|\leq\varepsilon s^2+C|s|^q\exp(\alpha s^2)\quad\forall\,
(x,s)\in\Omega\times \mathbb{R}.
\end{equation}
This together with (\ref{PTM1}) yields $F(\cdot,u)\in L^1(\Omega)$
for all $u\in W^{1,2}_0(\Omega)$. Consequently, the functional
\[
I(u) := \frac{1}{2} M(\|u\|^{2})-\int_{\Omega} F(x,u)\ \textrm{d}x
\]
is well defined on $W^{1,2}_{0}(\Omega)$. Moreover, by standard
arguments, $I\in C^1(W^{1,2}_0(\Omega),\mathbb{R})$ with
\[
\langle I'(u),\phi\rangle = m(\|u\|^{2})\int_{\Omega}\nabla u
\nabla \phi \ \textrm{d}x -\int_{\Omega} f(x,u)\phi \
\textrm{d}x,\quad u,\phi\in W^{1,2}_0(\Omega).
\]
Hence, its critical points correspond to weak solutions of
\eqref{P}, that is, $I$ is the Euler-Lagrange functional
associated to \eqref{P}. We are interested in ground state
solution $u$ for \eqref{P} in the following sense: $u$ is positive
and minimizes the energy functional $I$ within the set of
nontrivial solutions of \eqref{P}.

\section{Mountain pass structure}

In order to achieve our existence result, we shall use the
following version of the mountain pass theorem due to Ambrosetti
and Rabinowitz \cite{Ambrosetti}, without the Palais-Smale
condition:

\begin{Theorem}\label{MPT}
Let $E$ be a Banach space and $\Phi \in C^1(E;\mathbb{R})$ with
$\Phi(0)=0$. Suppose that there exist $\rho,\tau>0$ and $e\in E$
such
\begin{equation}\label{GMP}
\inf_{\|u\|=\rho}\Phi(u)\geq \tau\quad \text{and}\quad \Phi(e)\leq
0.
\end{equation}
Then $\Phi$ possesses a Palais-Smale sequence at level $c$
characterized as
\[
c:=\inf_{\gamma\in\Gamma}\max_{t\in [0,1]}\Phi(\gamma(t))\geq\tau,
\]
where $\Gamma = \left\{ \gamma \in C([0,1]; E):\gamma(0)=0\;\
\text{and} \;\ \gamma(1)=e \right\}$.
\end{Theorem}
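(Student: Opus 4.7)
The plan is to follow the classical deformation/minimax scheme: first verify that the minimax value $c$ is automatically bounded below by $\tau$, and then argue by contradiction assuming no Palais--Smale sequence at level $c$ exists, using a quantitative deformation lemma to lower the maximum of an almost optimal path below $c$.

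First I would prove the lower bound $c\geq\tau$. Fix any $\gamma\in\Gamma$. Since $\gamma(0)=0$ has norm $0<\rho$, and since $\Phi(e)\leq 0<\tau$ forces $\|e\|>\rho$ (otherwise the first hypothesis in \eqref{GMP} would give $\Phi(e)\geq\tau$), continuity of $t\mapsto\|\gamma(t)\|$ and the intermediate value theorem yield some $t_{0}\in(0,1)$ with $\|\gamma(t_{0})\|=\rho$. Hence $\max_{t\in[0,1]}\Phi(\gamma(t))\geq \Phi(\gamma(t_{0}))\geq\tau$, and taking the infimum over $\Gamma$ gives $c\geq\tau$.

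Next I would produce the Palais--Smale sequence. Suppose for contradiction that no such sequence exists at level $c$; then there are $\bar\varepsilon,\delta>0$ such that $\|\Phi'(u)\|\geq\delta$ whenever $|\Phi(u)-c|\leq 2\bar\varepsilon$. I would pick $\varepsilon\in(0,\min\{\bar\varepsilon,c/2,\tau/4\})$ so small that the strip $\{|\Phi-c|\leq 2\varepsilon\}$ avoids both $0$ and $e$ (using $\Phi(0)=0$ and $\Phi(e)\leq 0$, while $c\geq\tau>0$). The quantitative deformation lemma, proved via a locally Lipschitz pseudo--gradient vector field and the associated flow $\eta\in C([0,1]\times E,E)$, then supplies a continuous deformation $\eta(1,\cdot):E\to E$ with the properties: $\eta(1,u)=u$ outside $\{|\Phi-c|\leq 2\varepsilon\}$, $\Phi(\eta(1,u))\leq\Phi(u)$ for all $u$, and $\eta(1,\{\Phi\leq c+\varepsilon\})\subset\{\Phi\leq c-\varepsilon\}$.

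Finally I would choose an almost optimal path $\gamma_{0}\in\Gamma$ with $\max_{t}\Phi(\gamma_{0}(t))\leq c+\varepsilon$ and set $\tilde\gamma(t):=\eta(1,\gamma_{0}(t))$. By the choice of $\varepsilon$ the endpoints $0$ and $e$ lie outside the strip, so $\tilde\gamma(0)=0$ and $\tilde\gamma(1)=e$, i.e.\ $\tilde\gamma\in\Gamma$; but then $\max_{t}\Phi(\tilde\gamma(t))\leq c-\varepsilon<c$, contradicting the definition of $c$. I expect the main (and only nontrivial) obstacle to be the construction of the deformation $\eta$, since $\Phi$ is merely $C^{1}$ and one cannot use its gradient directly; this is handled by the standard pseudo--gradient construction on the (metric) set $\{u:\Phi'(u)\neq 0\}$ and a cutoff ensuring that the flow is the identity outside the relevant sublevel strip and at the endpoints $0,e$.
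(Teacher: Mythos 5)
The paper does not prove Theorem~\ref{MPT}: it is stated and attributed to Ambrosetti--Rabinowitz \cite{Ambrosetti}, and used as a black box. Your proof plan is the standard deformation-lemma proof of this (Palais--Smale-sequence) form of the mountain pass theorem, as found for instance in Willem's \emph{Minimax Theorems}, and it is essentially correct: the intermediate-value argument for $c\geq\tau$, the contrapositive giving a uniform lower bound on $\|\Phi'\|$ in a strip about level $c$, the pseudo-gradient flow and quantitative deformation lemma, and the observation that the endpoints $0$ and $e$ lie strictly below level $c-2\varepsilon$ (so the deformed path stays in $\Gamma$) all fit together as you describe. Two small points worth tightening in a full write-up: (i) when you invoke the quantitative deformation lemma you need $\varepsilon$ small not only in absolute terms but also relative to $\delta$ in the precise sense the lemma requires (e.g.\ $8\varepsilon/\delta'\leq\delta$ for the chosen displacement bound $\delta'$), which your ``so small'' covers implicitly but should be stated; and (ii) the conclusion of the contradiction is that a PS sequence at level $c$ exists, which you should phrase as: since for every $\varepsilon>0$ some $u$ with $|\Phi(u)-c|\leq 2\varepsilon$ has $\|\Phi'(u)\|<\delta(\varepsilon)$ with $\delta(\varepsilon)\to 0$, one can extract the desired sequence. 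Neither affects the validity of the argument.
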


The number $c$ is called \textit{mountain pass level} or
\textit{minimax level}  of the functional $\Phi$.

In the sequel, we show that the functional $I$ has the mountain
pass geometry, condition (\ref{GMP}) above. This is proved in the
next lemmas:

\begin{Lemma}\label{geometria1}
Assume that conditions $(M_{1})$, $(c)_{\alpha_0}$ and $(f_{2})$
hold. Then, there exist positive numbers $\rho$ and $\tau $ such
that
$$
I(u)\geq \tau,\,\, \forall \,u \in W^{1,2}_{0}(\Omega)\quad
\text{with}\quad \|u\|=\rho.
$$
\end{Lemma}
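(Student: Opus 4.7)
The plan is to exploit the lower bound on $M$ from $(M_1)$ and dominate the nonlinear term $\int_\Omega F(x,u)\,dx$ by a sum of a small multiple of $\|u\|^2$ and a higher-order term in $\|u\|$, so that positivity on the sphere of radius $\rho$ follows by choosing $\rho$ sufficiently small.

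First, I would observe that since $m(t)\geq m_0$ for every $t\geq 0$ and $M(0)=0$, integration yields $M(t)\geq m_0 t$, so
\[
I(u)\geq \frac{m_0}{2}\|u\|^2 - \int_\Omega F(x,u)\,\textrm{d}x.
\]
Next, I would use the pointwise estimate \eqref{estimativa 1}, fixing some $q>2$ (say $q=4$) and some $\alpha>\alpha_0$ to be adjusted, to obtain
\[
\int_\Omega F(x,u)\,\textrm{d}x \leq \varepsilon\|u\|_2^2 + C\int_\Omega |u|^q \exp(\alpha u^2)\,\textrm{d}x.
\]
The first summand is handled by Poincaré's inequality: $\|u\|_2^2\leq C_P\|u\|^2$. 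For the second, I would apply Hölder's inequality with conjugate exponents $2,2$:
\[
\int_\Omega |u|^q \exp(\alpha u^2)\,\textrm{d}x \leq \left(\int_\Omega |u|^{2q}\,\textrm{d}x\right)^{1/2}\left(\int_\Omega \exp(2\alpha u^2)\,\textrm{d}x\right)^{1/2}.
\]

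For the exponential factor, I would write $2\alpha u^2 = 2\alpha\|u\|^2 (u/\|u\|)^2$ and restrict attention to $u$ with $\|u\|\leq\rho$, where $\rho>0$ is chosen so that $2\alpha\rho^2\leq 4\pi$; the Trudinger-Moser inequality (\ref{PTM1}) then gives a uniform bound $C|\Omega|$ on this factor. For the $L^{2q}$ factor, the Sobolev embedding $W_0^{1,2}(\Omega)\hookrightarrow L^{2q}(\Omega)$ yields $\|u\|_{2q}^q\leq C\|u\|^q$. Putting everything together,
\[
I(u)\geq \left(\frac{m_0}{2}-\varepsilon C_P\right)\|u\|^2 - C_1\|u\|^q.
\]

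Finally, I would fix $\varepsilon>0$ so small that $\kappa:=\frac{m_0}{2}-\varepsilon C_P>0$, giving $I(u)\geq \kappa\|u\|^2 - C_1\|u\|^q$ on the ball $\|u\|\leq \rho$. Since $q>2$, shrinking $\rho$ further if necessary, one obtains a constant $\tau>0$ (e.g.\ $\tau = \kappa\rho^2/2$) such that $I(u)\geq\tau$ whenever $\|u\|=\rho$. The main technical point is the calibration of $\rho$ so that the Trudinger-Moser inequality applies uniformly on the sphere; the rest is a standard comparison between quadratic and super-quadratic terms at the origin. The role of hypothesis $(f_2)$ here is only to ensure, via \eqref{origem}, that $F$ vanishes faster than $s^2$ at zero so that the estimate \eqref{estimativa 1} is available with a genuinely small $\varepsilon$.
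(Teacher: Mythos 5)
Your proof is correct and follows essentially the same route as the paper: the growth estimate~\eqref{estimativa 1} with $q>2$, H\"older with exponent $2$, Sobolev embedding into $L^{2q}$, the normalization $2\alpha u^2 = 2\alpha\|u\|^2(u/\|u\|)^2$ to bring Trudinger--Moser to bear once $\rho$ is small enough, and then comparison of the quadratic term (with $\varepsilon$ fixed small) against the superquadratic remainder. The only cosmetic difference is that you make the lower bound $M(t)\geq m_0 t$ explicit, where the paper applies $(M_1)$ directly; the substance is identical.
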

\begin{proof} By using (\ref{estimativa 1}), we get
$$
\int_{\Omega}F(x,u) \ \textrm{d}x\leq
\varepsilon\int_{\Omega}|u|^{2} \ \textrm{d}x +
C\int_{\Omega}|u|^{q}\exp(\alpha u^{2}) \ \textrm{d}x,\,\, u\in
W^{1,2}_{0}(\Omega).
$$
Here, let us consider $q>2$. From Sobolev imbedding and H\"{o}lder
inequality, for $\|u\|\leq \rho_1$ we reach
$$
\begin{aligned}
\int_{\Omega}F(x,u) \ \textrm{d}x&\leq\varepsilon C_1\|u\|^2+
C\|u\|^{q}_{2q}\left[\int_{\Omega}\exp\left(2\alpha
\|u\|^{2}(u/\|u\|)^{2}\, \right) \textrm{d}x\right]^{1/2}\\
&\leq \varepsilon
C_1\|u\|^2+C_2\|u\|^q\left[\int_{\Omega}\exp\left(2\alpha\rho_1^2
(u/\|u\|)^{2}\, \right) \textrm{d}x\right]^{1/2}.
\end{aligned}
$$
Thus, if $\rho_1\leq \sqrt{2\pi/\alpha}$, using the
Trudinger-Moser inequality (\ref{PTM1}) and condition $(M_{1})$
one has
$$
I(u) \geq \|u\|^{2}\left(\frac{m_{0}}{2}-\varepsilon
C_1-C_3\|u\|^{q-2}\right).
$$
Next, fix $\varepsilon>0$ so that $m_0/2-\varepsilon C_1>0$.
Hence, since $q>2$, choose $0<\rho\leq \rho_1$ verifying
$m_{0}/2-\varepsilon C_1-C_3\rho^{q-2}>0$. Consequently, if
$\|u\|=\rho$ then $I(u)\geq \tau$, where
$$
\tau:=\rho^{2}\left(\frac{m_{0}}{2}-\varepsilon
C_1-C_3\rho^{q-2}\right)>0,
$$
and the lemma is proved.
\end{proof}

\begin{Lemma}\label{geometria2}
Assume that conditions $(M_2)$ and $(f_{1})$ hold. Then, there
exists $e\in W^{1,2}_{0}(\Omega)$ with
 $I(e)<0$ and $\|e\|>\rho$.
\end{Lemma}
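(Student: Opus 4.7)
The plan is to find $e$ along a ray $t\phi$, where $\phi \in C^\infty_c(\Omega)$ is a fixed nonnegative, nonzero bump function, and to show $I(t\phi) \to -\infty$ as $t \to +\infty$. The underlying mechanism is the mismatch between the at-most polynomial growth of $M$ forced by $(M_2)$ and the at-least exponential growth of $F$ forced by $(f_1)$.

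For the $M$-side, integrating $m(\tau) \leq a_1 + a_2 \tau^\sigma$ on $[t_0, \tau]$ and using continuity of $m$ on $[0, t_0]$, I obtain an estimate of the form
\[
M(\tau) \leq C_0 + a_1 \tau + C_1 \tau^{\sigma+1}, \qquad \tau \geq 0,
\]
with the obvious logarithmic modification when $\sigma = -1$; in particular $\tfrac{1}{2} M(t^2\|\phi\|^2)$ grows at most polynomially in $t$.

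For the $F$-side, the paper has already recorded the consequence $F(x, s) \geq F(x, s_0)\exp[K_0(s - s_0)]$ for $s \geq s_0$. The uniform exponential growth $(c)_{\alpha_0}$ yields $F(\cdot, s) \to +\infty$ uniformly in $x \in \Omega$, so on enlarging $s_0$ I may assume $F(x, s_0) \geq c_0 > 0$ for every $x \in \Omega$. Fix $\delta > 0$ and a measurable $A \subset \Omega$ with $|A| > 0$ on which $\phi \geq \delta$. Then for every $t \geq s_0/\delta$ one has $t\phi(x) \geq s_0$ on $A$, and hence
\[
\int_\Omega F(x, t\phi)\, dx \;\geq\; \int_A F(x, t\phi)\, dx \;\geq\; c_0\, |A|\, \exp\bigl[K_0(t\delta - s_0)\bigr].
\]

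Combining the two estimates,
\[
I(t\phi) \;\leq\; \tfrac{1}{2}\bigl(C_0 + a_1 \|\phi\|^2 t^2 + C_1 \|\phi\|^{2(\sigma+1)} t^{2(\sigma+1)}\bigr) - c_0\, |A|\, \exp\bigl[K_0(t\delta - s_0)\bigr],
\]
whose right-hand side tends to $-\infty$ as $t \to +\infty$. Picking $t$ large enough so that both $I(t\phi) < 0$ and $\|t\phi\| > \rho$, I set $e := t\phi$. The main subtlety I anticipate is securing the uniform lower bound $F(\cdot, s_0) \geq c_0 > 0$, which cannot be read off from $(f_1)$ alone and requires invoking the uniform exponential growth $(c)_{\alpha_0}$; once this positivity is in hand, the conclusion reduces to the elementary fact that an exponential function of $t$ eventually dominates every polynomial.
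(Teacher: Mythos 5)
Your proof is correct, and it takes a mildly different route than the paper's. Both proofs exploit the same tension between the at-most-polynomial growth of $M$ coming from $(M_2)$ and the superlinear growth of $F$ coming from $(f_1)$, and both run along a fixed ray $t\mapsto tu_0$ (or $t\phi$). Where you differ is in the lower bound used for $F$: the paper first derives the Ambrosetti--Rabinowitz consequence $\theta F(x,s)\leq sf(x,s)$ for $s\geq R_\theta$ from $(f_1)$, integrates it to get a polynomial bound $F(x,s)\geq C_1 s^\theta - C_2$ with $\theta>\max\{2,2\sigma+2\}$, and then beats the $t^{2\sigma+2}$ term in $\tfrac12 M(t^2)$ by choosing $\theta$ accordingly; you instead keep the sharper exponential consequence $F(x,s)\geq F(x,s_0)\exp[K_0^{-1}(s-s_0)]$ and note that an exponential eventually dominates every polynomial, which frees you from having to calibrate $\theta$ against $\sigma$. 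Your approach is a bit more robust and arguably cleaner. One point in your favor: you explicitly observe that a uniform positive lower bound on $F(\cdot,s_0)$ (or on $F(\cdot,R_\theta)$, in the paper's version) is needed and does not follow from $(f_1)$ alone; the paper's one-line derivation of the bound $F(x,s)\geq C_1 s^\theta-C_2$ quietly relies on the same positivity, which you secure from $(c)_{\alpha_0}$. Minor slips to tidy: the inequality from $(f_1)$ gives $\exp[K_0^{-1}(s-s_0)]$ rather than $\exp[K_0(s-s_0)]$ (the exponent direction is harmless for the argument), and the estimate $M(\tau)\leq C_0+a_1\tau+C_1\tau^{\sigma+1}$ should be stated only for $\tau$ large (it fails near $0$ when $\sigma<-1$), but since you only need large $t$ this does not affect the conclusion. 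You should also note, when writing $\int_\Omega F(x,t\phi)\,dx\geq\int_A F(x,t\phi)\,dx$, that this uses $F\geq 0$ on $\Omega\setminus A$, a fact the paper also uses and which follows from $f(x,s)\geq 0$ for $s\geq 0$.
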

\begin{proof}
First, we observe that for all $t\geq t_0$ condition $(M_2)$
implies that
\begin{equation}\label{estimativa M}
M(t)\leq \left\{
\begin{aligned}
&a_0+a_1t+\frac{a_2}{\sigma+1}t^{\sigma+1},\,\, &\textrm{if}&\,\,\, \sigma\neq -1,\\
&b_0+a_1t+a_2\ln t,\,\, &\textrm{if}&\,\,\, \sigma=-1,
\end{aligned}
 \right.
\end{equation}
where
$a_0=\int_0^{t_0}m(t)\textrm{d}t-a_1t_0-a_2t_0^{\sigma+1}/(\sigma+1)$
and $b_0=\int_0^{t_0}m(t)\textrm{d}t-a_1t_0-a_2\ln t_0$. On the
other hand, taking $\theta>\max\{2,2\sigma+2\}$ and using
(\ref{AR}) one can see that there exist constants $C_1, C_2>0$
such that
\begin{equation}
\label{estimativa F}
F(x,s)\geq C_1s^{\theta}-C_2\quad
\forall\,(x,s)\in\Omega\times [0,+\infty).
\end{equation}
Now, choose arbitrarily $u_{0}\in W^{1,2}_{0}(\Omega)$ with
$u_{0}\geq 0$ in $\Omega$ and $\|u_{0}\|=1$. Thus, from
\eqref{estimativa M} and \eqref{estimativa F}, for all $t\geq t_0$
we reach
$$
I(tu_0)\leq\left\{
\begin{aligned}
\frac{a_0}{2}+\frac{a_1}{2}t^2+\frac{a_2}{2\sigma+2}t^{2\sigma+2}-C_1\|u_0\|_{\theta}^{\theta}t^{\theta}+C_2|\Omega|,&\,\, &\textrm{if}&\,\,\, \sigma\neq -1,\\
\frac{b_0}{2}+\frac{a_1}{2}t^2+\frac{a_2}{2}\ln
t-C_1\|u_0\|_{\theta}^{\theta}t^{\theta}+C_2|\Omega|,& \,\,
&\textrm{if}&\,\,\, \sigma=-1,
\end{aligned}
 \right.
$$
from which we conclude that $I(tu_0)\rightarrow -\infty$ as
$t\rightarrow+\infty$ provided that $\theta>\max\{2,2\sigma+2\}$.
Hence, the result follows by considering $e=t_{*}u_{0}$ for some
$t_{*}>0$ enough large.
\end{proof}

\section{Minimax estimates}
According to Lemmas \ref{geometria1} and \ref{geometria2}, let be
$$
c_{*} = \displaystyle\inf_{\gamma \in \Upsilon}
\displaystyle\max_{t \in [0,1]} I(\gamma(t))>0,
$$
the minimax level of $I$, where $ \Upsilon = \{ \gamma \in
C([0,1],W^{1,2}_{0}(\Omega)) : \gamma(0)=0, ~I(\gamma(1)) < 0\}$.\\
In order to get a more precise information about the minimax level
$c_{*}$ obtained by Theorem \ref{MPT}, let us consider the
following sequence $\widetilde{G}_n:\mathbb{R}^2\rightarrow
\mathbb{R}$ of scaled and truncated Green's functions and also
considered by Moser (see \cite{djairo1}):
\begin{equation}\label{MM}
\widetilde{G}_{n}(x)=\frac{1}{\sqrt{2\pi}}\left\{
\begin{aligned}
&(\log n)^{1/2}, &\mbox{if}& \ \ |x|\leq \frac{1}{n}\\
&\dfrac{\log \frac{1}{|x|}}{(\log n)^{1/2}}, &\mbox{if}& \ \ \frac{1}{n}\leq |x|\leq 1\\
&0, &\mbox{if}& \ \ |x|\geq 1.
\end{aligned}
 \right.
\end{equation}
Let $x_{0} \in \Omega$  be such that the open ball $B_{d}(x_{0})$
is contained in $\Omega$, where $d$ was given in $(f_{3})$. It is
standard verify that the functions
$$
G_{n}(x):=\widetilde{G}_{n}\left(\frac{x-x_{0}}{d}\right),\,\,
x\in \Omega,
$$
belongs to $H^{1}_{0}(\Omega)$, $\|G_{n}\|=1$ and the support of
$G_{n}$ is contained in $B_{d}(x_{0})$. Furthermore, we have
\begin{Lemma}\label{limite}
The following inequality holds
\[
\liminf_{n\rightarrow\infty}\int_{B_d(x_0)}\exp(4\pi
G_n^2)\textrm{d}x\geq 3\pi d^2.
\]
\end{Lemma}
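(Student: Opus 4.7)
The idea is to compute the limit of $\int_{B_d(x_0)}\exp(4\pi G_n^2)\,\textrm{d}x$ explicitly; since this will turn out to be a genuine limit equal to $3\pi d^2$, the liminf bound follows. First I would rescale via $y = (x-x_0)/d$, which since $G_n(x) = \widetilde{G}_n((x-x_0)/d)$ gives
\[
\int_{B_d(x_0)} \exp(4\pi G_n^2)\,\textrm{d}x \;=\; d^2 \int_{B_1(0)} \exp(4\pi \widetilde{G}_n^2)\,\textrm{d}y,
\]
so it suffices to show the right-hand side converges to $3\pi$. Following the piecewise definition of $\widetilde{G}_n$, I would split $B_1(0)$ into the inner disk $\{|y|\le 1/n\}$ and the annulus $\{1/n \le |y|\le 1\}$.

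On the inner disk $\widetilde{G}_n^2\equiv (\log n)/(2\pi)$ is constant, so $\exp(4\pi\widetilde{G}_n^2)=n^2$ and this piece contributes exactly $n^2 \cdot (\pi/n^2)=\pi$ for every $n$. On the annulus, passing to polar coordinates and making the successive substitutions $t=\log(1/r)$ and then $s=t/\log n$ transforms the integral to
\[
2\pi \log n \int_0^1 n^{2s^2-2s}\,\textrm{d}s.
\]
The exponent $2s(s-1)$ is nonpositive on $[0,1]$, vanishes only at the endpoints $s=0$ and $s=1$, and is symmetric under $s\leftrightarrow 1-s$, so in the limit the only contribution comes from arbitrarily small neighborhoods of these two endpoints.

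To handle the endpoint $s=0$ I would substitute $u=2s\log n$, rewriting
\[
\log n \int_0^{1/2} n^{2s^2-2s}\,\textrm{d}s \;=\; \frac{1}{2}\int_0^{\log n} e^{-u + u^2/(2\log n)}\,\textrm{d}u.
\]
On the range $u\in[0,\log n]$ the integrand is dominated by $e^{-u/2}$, so dominated convergence yields the limit $\tfrac{1}{2}\int_0^\infty e^{-u}\,\textrm{d}u = 1/2$. By the symmetry $s\leftrightarrow 1-s$ the half $[1/2,1]$ contributes another $1/2$, so the full annulus integral tends to $2\pi$. Adding the two pieces gives $\int_{B_1(0)}\exp(4\pi\widetilde{G}_n^2)\,\textrm{d}y \to 3\pi$, and hence $\int_{B_d(x_0)}\exp(4\pi G_n^2)\,\textrm{d}x \to 3\pi d^2$, which is stronger than the stated liminf estimate. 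The only nontrivial step is the Laplace-type asymptotic $\log n\int_0^1 n^{2s^2-2s}\,\textrm{d}s \to 1$; everything else is routine changes of variables and elementary area computations.
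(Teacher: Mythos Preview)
Your proof is correct and shares its overall structure with the paper's: the rescaling $y=(x-x_0)/d$, the split into the inner disk and the annulus, and the substitution $s=\log(1/r)/\log n$ leading to $2\pi\log n\int_0^1 n^{2s^2-2s}\,\textrm{d}s$ are all the same. The difference lies in how the one--dimensional integral is handled. The paper simply bounds the exponent from below by its linear pieces, $2s^2-2s\ge -2s$ on $[0,1/2]$ and $2s^2-2s\ge 2s-2$ on $[1/2,1]$, which yields elementary integrals summing to $1-1/n$ and hence only the liminf inequality. You instead carry out an exact Laplace--type endpoint analysis: the substitution $u=2s\log n$ together with the domination $e^{-u+u^2/(2\log n)}\le e^{-u/2}$ and dominated convergence gives $\log n\int_0^1 n^{2s^2-2s}\,\textrm{d}s\to 1$, so you actually obtain the full limit $3\pi d^2$. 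Your argument is slightly more sophisticated and proves a stronger statement; the paper's argument is more elementary and exactly tailored to the liminf bound that the subsequent lemma needs.
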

\begin{proof}
By change of variable and using the definition of
$\widetilde{G}_n$, we have
\begin{equation}\label{change}
\begin{aligned}
\int_{B_d(x_0)}\exp(4\pi
G_n^2)\textrm{d}x&=d^2\int_{B_{\frac{1}{n}}(0)}\exp(4\pi
\widetilde{G}_n^2)\
\textrm{d}y+d^2\int_{\frac{1}{n}\leq|y|<1}\exp(4\pi
\widetilde{G}_n^2)\ \textrm{d}y\\
&=\pi d^2+2\pi
d^2\int_{\frac{1}{n}}^1\exp\left[\frac{2(\log(1/r))^2}{\log
n}\right]r \textrm{d}r\\
&=\pi d^2+2\pi d^2\log n\int_0^1n^{2s^2-2s}\ \textrm{d}s,
\end{aligned}
\end{equation}
where we also have used the change of variable $s=\log(1/r)/\log
n$ in the last integral. Next, since
$$
2s^2-2s\geq \left\{
\begin{aligned}
-2s,&\,\, &\textrm{for}&\,\,\, s\in[0,1/2],\\
2s-2,&\,\, &\textrm{for}&\,\,\, s\in[1/2,1],
\end{aligned}
 \right.
$$
we get
$$
\begin{aligned}
\log n\int_0^1n^{2s^2-2s}\ \textrm{d}s&\geq \log
n\int_0^{\frac{1}{2}}n^{-2s}\ \textrm{d}s+\log
n\int_{\frac{1}{2}}^1n^{2s-2}\ \textrm{d}s\\
&=1-\frac{1}{n}.
\end{aligned}
$$
Using this estimate in (\ref{change}) and passing to the limit, we
obtain the desired inequality.
\end{proof}

\medskip

Finally, the next result provides the desired estimate for the
level $c_*$.

\begin{Lemma}\label{nivel}
If conditions $(M_{1})-(M_{2})$ and $(f_{3})$ hold, then
$$
c_{*}<\frac{1}{2}M\left(\frac{4\pi}{\alpha_{0}}\right).
$$
\end{Lemma}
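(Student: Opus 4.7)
The plan is to test with the concentrating Moser-type functions $G_n$ defined in (\ref{MM}). Since $\|G_n\|=1$ and, by the argument of Lemma \ref{geometria2}, $I(tG_n)\to -\infty$ as $t\to\infty$, the maximum of $t\mapsto I(tG_n)$ on $[0,\infty)$ is attained at some $t_n>0$; fixing $t_*>0$ with $I(t_*G_n)<0$ makes $\gamma(t):=tt_*G_n$ an admissible path in $\Upsilon$ and gives
\[
c_*\leq \max_{t\geq 0}I(tG_n)=I(t_nG_n).
\]
It therefore suffices to show $I(t_nG_n)<\tfrac12 M(4\pi/\alpha_0)$ for some $n$.

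I argue by contradiction, supposing $I(t_nG_n)\geq \tfrac12 M(4\pi/\alpha_0)$ for every $n$. Because $F\geq 0$ and $M$ is strictly increasing (from $m\geq m_0>0$), this forces $t_n^2\geq 4\pi/\alpha_0$. The critical-point condition $\frac{d}{dt}I(tG_n)|_{t_n}=0$ reads
\begin{equation}\label{critEq}
m(t_n^2)\,t_n^2=\int_\Omega f(x,t_nG_n)\,t_nG_n\,\textrm{d}x.
\end{equation}
Fix $\beta\in(\tfrac{2}{\alpha_0 d^2}m(4\pi/\alpha_0),\beta_0)$, available by $(f_3)$, and let $R>0$ satisfy $sf(x,s)\geq \beta\exp(\alpha_0 s^2)$ for $s\geq R$. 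Since $G_n\equiv (2\pi)^{-1/2}(\log n)^{1/2}$ on $B_{d/n}(x_0)$ and $t_n\geq\sqrt{4\pi/\alpha_0}$, for $n$ large this pointwise bound holds on $B_{d/n}(x_0)$, so \eqref{critEq} gives
\[
m(t_n^2)\,t_n^2\geq \beta\pi d^2\, n^{\alpha_0 t_n^2/(2\pi)-2}.
\]
By $(M_2)$ the left-hand side is at most polynomial in $t_n$, whereas the right-hand side grows super-polynomially in $n$ whenever $t_n^2-4\pi/\alpha_0$ remains bounded away from $0$ or $t_n\to\infty$. Excluding both possibilities (this is the main technical obstacle) forces $t_n^2\to 4\pi/\alpha_0$.

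To finish, the elementary inequality $sf(x,s)\geq \beta(\exp(\alpha_0 s^2)-\exp(\alpha_0 R^2))$, valid for all $s\geq 0$, yields
\[
\int_\Omega f(x,t_nG_n)\,t_nG_n\,\textrm{d}x\geq \beta\int_\Omega \exp(\alpha_0 t_n^2 G_n^2)\,\textrm{d}x-\beta\int_{\{t_nG_n<R\}}\exp(\alpha_0 t_n^2 G_n^2)\,\textrm{d}x.
\]
Since $t_nG_n\to 0$ a.e., dominated convergence with the constant dominant $\exp(\alpha_0 R^2)$ sends the subtracted integral to $|\Omega|$. On the other hand, $\alpha_0 t_n^2\geq 4\pi$ and $G_n\equiv 0$ outside $B_d(x_0)$ combined with Lemma \ref{limite} give
\[
\liminf_n\int_\Omega \exp(\alpha_0 t_n^2 G_n^2)\,\textrm{d}x\geq |\Omega\setminus B_d(x_0)|+3\pi d^2.
\]
Subtracting and using $|B_d(x_0)|=\pi d^2$ leads to $\liminf_n\int_\Omega f(x,t_nG_n)t_nG_n\,\textrm{d}x\geq 2\pi d^2\beta$. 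Passing to the limit in \eqref{critEq} via continuity of $m$ then yields $\tfrac{4\pi}{\alpha_0}m(4\pi/\alpha_0)\geq 2\pi d^2\beta$, which contradicts the choice of $\beta$ and completes the proof.
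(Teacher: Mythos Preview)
Your proof is correct and follows essentially the same route as the paper's: both argue by contradiction with the Moser functions $G_n$, use the critical-point identity on $B_{d/n}(x_0)$ together with $(M_2)$ to force $t_n^2\to 4\pi/\alpha_0$, then split the integral according to where $t_nG_n$ exceeds the threshold $R$ (the paper's $s_\delta$), apply Lemma~\ref{limite}, and pass to the limit to contradict $(f_3)$. The only cosmetic differences are that you fix $\beta\in\bigl(\tfrac{2}{\alpha_0 d^2}m(4\pi/\alpha_0),\beta_0\bigr)$ once instead of writing $\beta_0-\delta$ and letting $\delta\to 0$, and that you integrate over all of $\Omega$ (carrying the harmless $|\Omega\setminus B_d(x_0)|$ contribution) rather than restricting to $B_d(x_0)$; note, though, that the displayed lower bound you write after the ``elementary inequality'' really comes from the splitting $\{t_nG_n\geq R\}\cup\{t_nG_n<R\}$ and $sf(x,s)\geq 0$, not directly from that inequality.
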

\begin{proof} Since $G_n\geq 0$ in $\Omega$ and $\|G_{n}\|=1$, as in the proof of Lemma \ref{geometria2},
we have that $I(tG_n)\rightarrow -\infty$ as
$t\rightarrow+\infty$. Consequently,
$$
c_{*}\leq \displaystyle\max_{t>0}I(t G_{n}),\,\, \forall\ n\in
\mathbb{N}.
$$
Thus, it suffices to show that $\max_{t>0}I(t
G_{n})<\frac{1}{2}M(4\pi/\alpha_{0})$ for some $n\in \mathbb{N}$.
Suppose, by contradiction, that
\begin{equation}\label{gugu}
\displaystyle\max_{t>0}I(t G_{n})\geq
\frac{1}{2}M\left(\frac{4\pi}{\alpha_{0}}\right),\,\, \forall\
n\in \mathbb{N}.
\end{equation}
As $I$ possesses the mountain pass geometry, for each $n$ there
exists $t_{n}>0$ such that
$$
I(t_{n}G_{n})=\max_{t>0}I(t G_{n}).
$$
From this and using that $F(x,s)\geq 0$ for all $(x,s) \in
\Omega\times \mathbb{R}$ by \eqref{gugu} one has $
M(t_{n}^{2})\geq M(4\pi/\alpha_{0})$. By condition $(M_{1})$,
$M:[0,+\infty)\rightarrow [0,+\infty)$ is a increasing bijection
and so
\begin{eqnarray}\label{usar}
t_{n}^{2} \geq \frac{4 \pi}{\alpha_{0}}.
\end{eqnarray}
On the other hand,
$$
\frac{d}{d t}I(tG_{n})_{\displaystyle\bigl|_{t=t_{n}}}=0,
$$
from which we obtain
\begin{equation}\label{des2}
m(t_{n}^{2})
t_{n}^{2}=\displaystyle\int_{\Omega}f(x,t_{n}G_{n})t_{n}G_{n}\
\textrm{d}x\geq
\displaystyle\int_{B_{d}(x_{0})}f(x,t_{n}G_{n})t_{n}G_{n}
 \ \textrm{d}x.
\end{equation}
By change of variable,
$$
\begin{aligned}
m(t_{n}^{2}) t_{n}^{2}&\geq d^{2}
\int_{B_{1}(0)}f(x_{0}+dx,t_{n}\widetilde{G}_{n})t_{n}\widetilde{G}_{n}\
\textrm{d}x\\
&\geq d^{2}
\int_{B_{1/n}(0)}f\left(x_{0}+dx,\frac{t_{n}}{\sqrt{2\pi}}(\log
n)^{1/2}\right)\frac{t_{n}}{\sqrt{2\pi}}(\log n)^{1/2} \
\textrm{d}x.
\end{aligned}
$$
In view of (\ref{usar}), it follows that $(\log
n)^{1/2}t_{n}/\sqrt{2\pi}\rightarrow +\infty$ as
$n\rightarrow\infty$. Hence, by $(f_{3})$ given $\delta>0$ there
exists $s_\delta>0$ such that
\begin{equation}\label{des3}
f(x,s)s\geq (\beta_0-\delta)\exp(\alpha_0s^2),\quad\forall\
(x,s)\in\Omega\times[s_\delta,+\infty).
\end{equation}
So we obtain $n_{0} \in \mathbb{N}$ such that
$$
f\left(x_{0}+dx,\frac{t_{n}}{\sqrt{2\pi}}(\log
n)^{1/2}\right)\frac{t_{n}}{\sqrt{2\pi}}(\log n)^{1/2} \geq
(\beta_{0}-\delta)\exp\left(\alpha_{0}\frac{t_{n}^{2}}{2\pi}\log n
\right),
$$
for all $ n\geq n_{0}$.
Thus,
\begin{eqnarray}\label{usar1}
 m(t_{n}^{2}) t_{n}^{2}&\geq &
(\beta_{0}-\delta)d^{2}\exp\left(\alpha_{0}\frac{t_{n}^{2}}{2\pi}\log
n
\right)\frac{\pi}{n^{2}}\nonumber\\
&=& (\beta_{0}-\delta)\pi d^{2}\exp(-2\log
n)\exp\left(\alpha_{0}\frac{t_{n}^{2}}{2\pi}\log n \right)\nonumber\\
&=& (\beta_{0}-\delta)\pi
d^{2}\exp\left[2(\alpha_{0}\frac{t_{n}^{2}}{4\pi}-1)\log n
\right].
\end{eqnarray}
Note that, from $(M_{2})$, we can conclude that
$$
\displaystyle\frac{m(t_{n}^{2})
t_{n}^{2}}{\exp\left[2(\alpha_{0}\frac{t_{n}^{2}}{4\pi}-1)\log n
\right]}\rightarrow 0\quad\mbox{if}\quad t_{n}\rightarrow +\infty.
$$
Hence, from (\ref{usar1}), $(t_{n})$ must be bounded in
$\mathbb{R}$. So, up to a subsequence, $t_{n}\rightarrow t_{0}\geq
\sqrt{4\pi/\alpha_0}$. Moreover, using (\ref{usar1}) again, we
must have $\alpha_{0}\frac{t_{0}^{2}}{4\pi}-1\leq 0$ and therefore
\begin{eqnarray}\label{usar2}
t_{n}^{2}\rightarrow \frac{4 \pi}{\alpha_{0}}.
\end{eqnarray}
At this point, following arguments as in \cite{djairo} and
\cite{djairo1} we are going to estimate (\ref{des2}) more exactly.
For this, in view of (\ref{des3}), for $0<\delta<\beta_0$ and
$n\in \mathbb{N}$ we set
$$
D_{n,\delta}:=\{x\in B_d(x_0):t_nG_n(x)\geq s_\delta\}\quad
\textrm{and}\quad E_{n,\delta}:=B_d(x_0)\backslash D_{n,\delta}.
$$
Thus, by splitting the integral (\ref{des2}) on $D_{n,\delta}$ and
$E_{n,\delta}$ and using (\ref{des3}), it follows that
\begin{equation}\label{des4}
\begin{aligned}
m(t_n^2)t_n^2\geq&(\beta_0-\delta)\int_{B_d(x_0)}\exp(\alpha_0t_n^2G_n^2)\textrm{d}x
-(\beta_0-\delta)\int_{E_{n,\delta}}\exp(\alpha_0t_n^2G_n^2)\textrm{d}x\\
&+\int_{E_{n,\delta}}f(x,t_nG_n)t_nG_n\ \textrm{d}x.
\end{aligned}
\end{equation}
Since $G_n(x)\rightarrow 0$ for almost everywhere $x\in B_d(x_0)$
we have that the characteristic functions $\chi_{E_{n,\delta}}$
satisfy
$$
\chi_{E_{n,\delta}}\rightarrow 1\,\,\, \textrm{a.e. in
}B_d(x_0)\,\,\, \textrm{as }n\rightarrow\infty.
$$
Moreover, $t_nG_n<s_\delta$ in $E_{n,\delta}$. Thus, invoking the
Lebesgue dominated convergence theorem we obtain
$$
\int_{E_{n,\delta}}\exp(\alpha_0t_n^2G_n^2)\textrm{d}x\rightarrow
\pi d^2\quad \textrm{and}\quad
\int_{E_{n,\delta}}f(x,t_nG_n)t_nG_n\ \textrm{d}x\rightarrow 0.
$$
Now, by using these convergences, (\ref{usar}), (\ref{usar2}) and
Lemma \ref{limite}, passing to the limit in (\ref{des4}) we reach
$$
\begin{aligned}
m\left(\frac{4\pi}{\alpha_{0}}\right)\frac{4\pi}{\alpha_{0}}&\geq(\beta_0-\delta)\liminf_{n\rightarrow\infty}\int_{B_d(x_0)}\exp(4\pi
G_n^2)\textrm{d}x-(\beta_0-\delta)\pi d^2\\
&\geq (\beta_0-\delta)2\pi d^2,\,\,\, \forall\
\delta\in(0,\beta_0),
\end{aligned}
$$
and doing $\delta\rightarrow 0^+$ we get $\beta_{0} \leq
\frac{2}{\alpha_{0}d^{2}}m(4\pi/\alpha_{0})$, which contradicts
$(f_3)$. Thus, the lemma is proved.
\end{proof}

\medskip

At this stage, we consider the Nehari manifold associated to the
functional $I$, namely,
$$
\mathcal{N}:=\{u\in W^{1,2}_0(\Omega):\langle I'(u),u\rangle=0,\,
u\ne 0\}
$$
and the number $b:=\inf_{u\in \mathcal{N}}I(u)$. To compare the
minimax level $c_*$ and $b$, we need the following lemma:
\begin{Lemma}\label{crescente}
If condition $(f_2)$ holds then, for each $x\in \Omega$,
$$
sf(x,s)-4F(x,s)\textrm{ is increasing for }s>0.
$$
In particular, $sf(x,s)-4F(x,s)\geq 0$ for all $(x,s)\in
\Omega\times[0,+\infty)$.
\end{Lemma}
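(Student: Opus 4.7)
The plan is to exploit the monotonicity of $h(x,s) := f(x,s)/s^3$ (hypothesis $(f_2)$) via an integral comparison, thereby avoiding any use of derivatives of $f$, which are not assumed to exist. For fixed $x \in \Omega$, set $g(s) := sf(x,s) - 4F(x,s)$ for $s \geq 0$. Writing $f(x,t) = t^3 h(x,t)$ for $t > 0$, one has
\[
sf(x,s) = s^4 h(x,s), \qquad F(x,s) = \int_0^s t^3 h(x,t)\,\mathrm{d}t,
\]
the second identity being valid by the integrability of $t^3 h(x,t) = f(x,t)$ near $0$ (which follows from \eqref{origem}).

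For $0 < s_1 < s_2$, I would then compute
\[
g(s_2) - g(s_1) = s_2^4 h(x,s_2) - s_1^4 h(x,s_1) - 4\int_{s_1}^{s_2} t^3 h(x,t)\,\mathrm{d}t.
\]
Since $t \mapsto h(x,t)$ is increasing by $(f_2)$, the integral is bounded above by
\[
h(x,s_2)\int_{s_1}^{s_2} t^3\,\mathrm{d}t = \frac{h(x,s_2)}{4}\bigl(s_2^4 - s_1^4\bigr).
\]
Substituting this estimate yields, after cancellation,
\[
g(s_2) - g(s_1) \geq s_1^4\bigl(h(x,s_2) - h(x,s_1)\bigr) > 0,
\]
which is precisely the strict monotonicity claim.

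For the nonnegativity in the ``In particular'' part, I would observe that $g$ is continuous on $[0,\infty)$ and, by the remark after \eqref{origem}, $f(x,0) = 0$, whence $F(x,0) = 0$ and $g(0) = 0$. Combined with the monotonicity on $(0,\infty)$ just established and continuity at the origin, this gives $g(s) \geq 0$ for every $s \geq 0$. The only place requiring care is the integral estimate; once one trusts monotonicity of $h$ in place of its (possibly nonexistent) derivative, the rest is bookkeeping. The same scheme, applied to $k(t) := m(t)/t$ with $m(t) = t\,k(t)$ and $M(t) = \int_0^t s\,k(s)\,\mathrm{d}s$, also delivers $(\widehat{M}_3)$ from $(M_3)$, explaining the paper's parenthetical reference to this lemma's proof.
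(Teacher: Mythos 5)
Your proof is correct and uses essentially the same idea as the paper's: bound $\int_{s_1}^{s_2} f(x,\tau)\,\mathrm{d}\tau$ from above by pulling out the monotone factor $f(x,s_2)/s_2^3$, so that $g(s_2)-g(s_1)\geq s_1^4\bigl(h(x,s_2)-h(x,s_1)\bigr)>0$. Your closing observation, that the same scheme applied to the nonincreasing function $k(t)=m(t)/t$ with exponent $2$ in place of $4$ delivers $(\widehat{M}_3)$ from $(M_3)$, is exactly what the paper's parenthetical cross-reference to this lemma's proof is alluding to.
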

\begin{proof}
Suppose $0<s<t$. For each $x\in \Omega$, we obtain
$$
\begin{aligned}
sf(x,s)-4F(x,s)=\ &\frac{f(x,s)}{s^3}s^4-4F(x,t)+4\int_s^tf(x,\tau)\textrm{d}\tau\\
<\ &\frac{f(x,t)}{t^3}s^4-4F(x,t)+\frac{f(x,t)}{t^3}(t^4-s^4)\\
=\ &tf(x,t)-4F(x,t)
\end{aligned}
$$
and this proves the lemma.
\end{proof}
The next result is crucial in our arguments to prove the existence
of a ground state solution for \eqref{P}.
\begin{Lemma}
If $(M_3)$ and $(f_2)$ are satisfied then $c_*\leq b$.
\end{Lemma}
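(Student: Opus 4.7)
My plan is to show $c_* \leq I(u)$ for every $u \in \mathcal{N}$ and then conclude by taking the infimum over $\mathcal{N}$. Fix $u \in \mathcal{N}$. I would build a path $\gamma \in \Upsilon$ of the form $\gamma(s) = sTu$, $s \in [0,1]$, with $T>1$ chosen so that $I(Tu) < 0$, and show that the maximum of $I$ along $\gamma$ equals $I(u)$.

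The heart of the argument is that $\varphi(t) := I(tu)$ attains its unique maximum on $[0,\infty)$ at $t = 1$. Since $f(x,s) = 0$ for $s \leq 0$, for $t > 0$ I would rewrite
\[
\frac{\varphi'(t)}{t^3} \;=\; \|u\|^4 \cdot \frac{m(t^2\|u\|^2)}{t^2\|u\|^2} \;-\; \int_{\{u>0\}} \frac{f(x,tu(x))}{(tu(x))^3}\, u(x)^4 \, \textrm{d}x.
\]
By $(M_3)$ the first term is nonincreasing in $t$, while by $(f_2)$ the integrand on the right is strictly increasing in $t$ on $\{u>0\}$. The Nehari identity gives $\int_\Omega f(x,u)u\,\textrm{d}x = m(\|u\|^2)\|u\|^2 > 0$, so $\{u>0\}$ has positive measure and $\varphi'(t)/t^3$ is strictly decreasing on $(0,\infty)$. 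Combined with $\varphi'(1) = 0$, this yields $\varphi'(t) > 0$ for $0 < t < 1$ and $\varphi'(t) < 0$ for $t > 1$, hence $\varphi(1) = I(u) = \max_{t \geq 0}\varphi(t)$.

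To produce the required $T$, I would show $\varphi(t) \to -\infty$ as $t \to \infty$. Using $(M_3)$ once more, $m(s)/s \leq m(\|u\|^2)/\|u\|^2$ for $s \geq \|u\|^2$, so integrating gives $M(t^2\|u\|^2) \leq C\, t^4$ for $t \geq 1$. On the other hand, the exponential critical growth $(c)_{\alpha_0}$ forces $F(x,s)/s^4 \to +\infty$ as $s \to +\infty$, so Fatou's lemma applied on $\{u>0\}$ yields $t^{-4}\int_\Omega F(x,tu)\,\textrm{d}x \to +\infty$. Combining these gives $\varphi(t)/t^4 \to -\infty$.

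With such $T$ in hand, the path $\gamma(s) = sTu$ lies in $\Upsilon$ and
\[
\max_{s\in[0,1]} I(\gamma(s)) \;=\; \max_{t\in[0,T]} \varphi(t) \;=\; \varphi(1) \;=\; I(u),
\]
so $c_* \leq I(u)$. Taking the infimum over $\mathcal{N}$ yields $c_* \leq b$. The main technical point is the monotonicity of $\varphi'(t)/t^3$: the nonlocal Kirchhoff factor $m(t^2\|u\|^2)$ threatens to disturb the usual Nehari argument, and it is precisely $(M_3)$ that forces it into the correct monotone direction, allowing the strict superlinearity from $(f_2)$ to pin down the unique critical point of $\varphi$ at $t = 1$.
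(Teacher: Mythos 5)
Your proof is correct and follows essentially the same strategy as the paper's: for $u\in\mathcal{N}$, show $I(u)=\max_{t\ge 0}I(tu)$ by analyzing the sign of $\frac{d}{dt}I(tu)$, using $(M_3)$ and $(f_2)$ to get monotonicity relative to $t=1$ (where the Nehari identity forces the derivative to vanish), then exhibit a ray-path in $\Upsilon$. The paper writes $h'(t)$ in an explicitly subtracted form using the Nehari identity, whereas you observe directly that $\varphi'(t)/t^3$ is monotone and vanishes at $t=1$ --- an equivalent computation --- and you also spell out why $I(tu)\to -\infty$ (via $(M_3)$ and $(c)_{\alpha_0}$), a step the paper leaves implicit by reference to the earlier geometry lemma.
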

\begin{proof}
Let $u$ be in $\mathcal{N}$ and define
$h:(0,+\infty)\rightarrow\mathbb{R}$ by $h(t)=I(tu)$. We have that
$h$ is differentiable and
$$
h'(t)=\langle
I'(tu),u\rangle=m(t^2\|u\|^2)t\|u\|^2-\int_{\Omega}f(x,tu)u\
\textrm{d}x,\quad\forall\ t>0.
$$
Since $\langle I'(u),u\rangle=0$, that is,
$m(\|u\|^2)\|u\|^2=\int_{\Omega}f(x,u)u\ \textrm{d}x$, we get
$$
h'(t)=t^3\|u\|^4\left[\frac{m(t^2\|u\|^2)}{t^2\|u\|^2}-\frac{m(\|u\|^2)}{\|u\|^2}\right]
+t^3\int_{\Omega}\left[\frac{f(x,u)}{u^3}-\frac{f(x,tu)}{(tu)^3}\right]u^4\textrm{d}x.
$$
We observe that $h'(1)=0$ and by $(M_3)$ and $(f_2)$, it follows
that $h'(t)\geq 0$ for $0<t<1$ and $h'(t)\leq 0$ for $t>1$. Hence,
$$
I(u)=\max_{t\geq0}I(tu).
$$
Now, defining $g:[0,1]\rightarrow W^{1,2}_0(\Omega)$,
$g(t)=tt_0u$, where $t_0$ is such that $I(t_0u)<0$, we have $g\in
\Upsilon$ and therefore
$$
c_*\leq \max_{t\in[0,1]}I(g(t))\leq \max_{t\geq 0}I(tu)=I(u).
$$
Since $u\in \mathcal{N}$ is arbitrary $c_*\leq b$ and the proof is
complete.
\end{proof}

\begin{Remark}\label{constante}
We observe that if $m(t)\equiv K$ for some $K>0$, then the
arguments in the previous lemma work if we suppose the condition
$f(x,s)/s$ is increasing for $s>0$ holds instead of $(f_2)$. In
this case, we would have $sf(x,s)-2F(x,s)$ increasing for $s>0$ as
the property in Lemma \ref{crescente}.
\end{Remark}

\begin{Remark}\label{ground1} We recall that a solution $u_0$ of \eqref{P} is a
ground state if $I(u_0)=d:=\inf_{u\in \mathcal{S}}I(u)$ where
$$
\mathcal{S}:=\{u\in W^{1,2}_0(\Omega):I'(u)=0,\, u\ne 0\}.
$$
Since $c_*\leq b\leq d$, in order to obtain a ground state $u_0$ for
\eqref{P} it is enough to show that there is $u_0\in \mathcal{S}$
and $I(u_0)=c_*$.
\end{Remark}

\section{Proof of Theorem \ref{teorema1}}

This section is devoted to the proof of our main result. For this
purpose, we shall use the following result of convergence, whose
proof can be found, for instance, in \cite{djairo}:
\begin{Lemma}\label{djairo}
Suppose $\Omega$ is a bounded domain in $\mathbb{R}^2$. Let
$(u_n)$ be in $L^1(\Omega)$ such that $u_n\rightarrow u$ in
$L^1(\Omega)$ and let $f(x, s)$ be a continuous function. Then
$f(x,u_n)\rightarrow f(x, u)$ in $L^1(\Omega)$ provided that $f(x,
u_n)\in L^1(\Omega)$ for all $n$ and $\int_\Omega |f(x,
u_n)u_n|\textrm{d}x \leq C$.
\end{Lemma}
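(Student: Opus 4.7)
The plan is to reduce the claim to Vitali's convergence theorem. Since $\Omega$ is bounded, no tightness is required, so Vitali applies once we produce (i) convergence in measure of $f(\cdot,u_n)$ to $f(\cdot,u)$ and (ii) equi-integrability of the family $\{f(\cdot,u_n)\}$ in $L^1(\Omega)$. From $u_n\to u$ in $L^1$ we get, passing to a subsequence, $u_n\to u$ a.e.~in $\Omega$; then continuity of $f$ in the second variable gives $f(x,u_n(x))\to f(x,u(x))$ a.e. This yields convergence in measure since $|\Omega|<\infty$. A standard subsequence argument will let us conclude convergence of the full sequence at the end.

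To see that $f(\cdot,u)\in L^1(\Omega)$, I would apply Fatou's lemma to $|f(x,u_n)u_n|$ to get $\int_\Omega |f(x,u)u|\,\textrm{d}x\leq C$. On $\{|u|\geq 1\}$ this controls $|f(\cdot,u)|$ by $|f(\cdot,u)u|$, while on $\{|u|<1\}$ the function $|f(\cdot,u)|$ is bounded by $\max_{\overline{\Omega}\times[-1,1]}|f|$, so $f(\cdot,u)\in L^1(\Omega)$.

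The heart of the argument is equi-integrability, and this is the place where the hypothesis $\int_\Omega|f(x,u_n)u_n|\,\textrm{d}x\leq C$ is used. Fix $\varepsilon>0$ and choose $M>2C/\varepsilon$. Split any measurable $E\subset\Omega$ as
\[
\int_E|f(x,u_n)|\,\textrm{d}x=\int_{E\cap\{|u_n|\leq M\}}|f(x,u_n)|\,\textrm{d}x+\int_{E\cap\{|u_n|>M\}}|f(x,u_n)|\,\textrm{d}x.
\]
On the first piece, continuity of $f$ and compactness give $|f(x,u_n)|\leq K_M:=\max_{\overline{\Omega}\times[-M,M]}|f|$, so this term is bounded by $K_M|E|$. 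On the second piece, $|f(x,u_n)|\leq |f(x,u_n)u_n|/M$, so this term is bounded by $C/M<\varepsilon/2$. Taking $\delta:=\varepsilon/(2K_M)$, any $E$ with $|E|<\delta$ yields $\int_E|f(x,u_n)|\,\textrm{d}x<\varepsilon$ uniformly in $n$.

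Combining equi-integrability with a.e.~convergence on the bounded domain $\Omega$, Vitali's theorem gives $f(\cdot,u_n)\to f(\cdot,u)$ in $L^1(\Omega)$ along the extracted subsequence. Since every subsequence of $(u_n)$ has a further subsequence with $L^1$ convergence of $f(\cdot,u_n)$ to the same limit $f(\cdot,u)$, uniqueness of limits forces the full sequence to converge in $L^1$. The main obstacle in this plan is precisely the equi-integrability step; the truncation at level $M$ together with the bound on $\int|f(x,u_n)u_n|\,\textrm{d}x$ is the standard device that turns the pointwise convergence (which alone does not imply $L^1$ convergence of $f(\cdot,u_n)$) into the desired strong convergence.
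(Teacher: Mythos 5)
The paper does not prove this lemma itself; it simply refers the reader to the cited paper of de~Figueiredo, Miyagaki and Ruf. Your proof is correct and rests on the same underlying ideas as the argument in that reference: pass to an a.e.\ convergent subsequence, truncate at level $M$, use the bound $\int_\Omega |f(x,u_n)u_n|\,\textrm{d}x\leq C$ to control the tail $\{|u_n|>M\}$, and use compactness of $\overline{\Omega}\times[-M,M]$ (together with continuity of $f$ up to the boundary, which is the intended reading of the hypothesis) to bound $f$ on the complement. Packaging this as an equi-integrability estimate and invoking Vitali's convergence theorem on the finite measure space $\Omega$ is a clean way to present it, and the closing subsequence argument to upgrade from a subsequence to the full sequence is the standard one and is stated correctly. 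The only point worth being explicit about is that $K_M=\max_{\overline{\Omega}\times[-M,M]}|f|$ is finite precisely because $f$ is continuous on the compact set $\overline{\Omega}\times[-M,M]$; with that understood, there is no gap.
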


\medskip

\begin{proof}[Proof of Theorem \ref{teorema1}] By Lemmas \ref{geometria1} and \ref{geometria2} we can invoke Theorem \ref{MPT} to obtain a sequence
$(u_{n})$ in  $W^{1,2}_{0}(\Omega)$ verifying
$$
I(u_{n}) \rightarrow c_{*} \,\,\,\mbox{and} \,\,\,\, I'(u_{n})
\rightarrow 0.
$$
By using (\ref{AR}), with $\theta>4$, $(M_1)$ and \eqref{conseq1}
we obtain
\begin{equation}\label{limitada}
\begin{aligned}
C + \|u_{n}\|&\geq  I(u_{n})-
\displaystyle\frac{1}{\theta}\langle I'(u_{n}),u_{n}\rangle\\
& \geq
\left(\frac{\theta-4}{4\theta}\right)m_0\|u_{n}\|^{2}-C_\theta
|\Omega|,\,\,\forall\ n \in \mathbb{N},
\end{aligned}
\end{equation}
for some $C>0$, where $C_\theta=\sup\{|f(x,s)s-\theta
F(x,s)|:(x,s)\in \Omega\times[0,R_\theta]\}$. Hence $(u_{n})$ is
bounded in $W^{1,2}_{0}(\Omega)$ and, up to a subsequence, for
some $u_0\in W^{1,2}_0(\Omega)$, one has
\begin{equation}\label{fatos}
\begin{aligned}
&u_n\rightharpoonup u_0\,\,\,\, \textrm{in }\,\,W^{1,2}_0(\Omega),\\
&u_n\rightarrow u_0\,\,\,\, \textrm{in }\,\,L^p(\Omega)\,\,\,\,
\textrm{for }\,\, 1\leq p<\infty.
\end{aligned}
\end{equation}
In particular, $u_n(x)\rightarrow u_0(x)$ for almost every
$x\in\Omega$ and by (\ref{limitada}) it also follows that
$\int_{\Omega}|f(x,u_{n})u_{n}| \textrm{d}x$ is bounded. Thus, we
can apply Lemma \ref{djairo} to conclude that
\[
\int_{\Omega}f(x,u_{n}) \ \textrm{d}x\rightarrow
\int_{\Omega}f(x,u_0)  \ \textrm{d}x
\]
and therefore using $(f_1)$ and generalized Lebesgue dominated
convergence theorem, we can see that
\begin{equation}\label{ggabriel}
\displaystyle\int_{\Omega}F(x,u_{n}) \ \textrm{d}x\rightarrow
\displaystyle\int_{\Omega}F(x,u_0)  \ \textrm{d}x.
\end{equation}
At this point, we affirm that $u_0\ne 0$. In fact, suppose by
contradiction that $u_0\equiv 0$. Hence,
$\int_{\Omega}F(x,u_{n})\textrm{d}x\rightarrow 0$ and so
$$
\frac{1}{2}M(\|u_n\|^2)\rightarrow
c_*<\frac{1}{2}M(4\pi/\alpha_0).
$$
Thus, there exist $n_0\in \mathbb{N}$ and $\beta>0$ such that
$\alpha_0\|u_n\|^2<\beta<4\pi$ for all $n\geq n_0$. Now, choose
$q>1$ close to 1 and $\alpha>\alpha_0$ close to $\alpha_0$ so that
we still have $q\alpha\|u_n\|^2<\beta<4\pi$. From this and by
using \eqref{origem}, $(c)_{\alpha_0}$, H\"older inequality,
\eqref{PTM1} and \eqref{fatos} we get
$$
\begin{aligned}
\left|\int_{\Omega}f(x,u_{n})u_n \textrm{d}x\right|&\leq
C_1\int_{\Omega}|u_n|^2\
\textrm{d}x+C_2\int_{\Omega}|u_n|\exp(\alpha u_n^2)\ \textrm{d}x\\
&\leq
C_1\|u_n\|_2^2+C_2\|u_n\|_{{\frac{q}{q-1}}}\left(\int_{\Omega}\exp\left[q\alpha\|u_n\|^2\Large(u_n/\|u_n\|)^2\right]
\textrm{d}x\right)^{\frac{1}{q}}\\
&\leq C_1\|u_n\|_2^2+C_3\|u_n\|_{{\frac{q}{q-1}}}\,\longrightarrow
0\quad \textrm{as}\quad n\rightarrow\infty.
\end{aligned}
$$
Hence, since
$$
\langle I'(u_n),u_n\rangle=
m(\|u_n\|^2)\|u_n\|^2-\int_\Omega f(x,u_{n})u_n \textrm{d}x
$$
and $\langle I'(u_n),u_n\rangle\rightarrow 0$ it follows that
$m(\|u_n\|^2)\|u_n\|^2\rightarrow 0$. Consequently by $(M_1)$
$\|u_n\|^2\rightarrow 0$ and therefore $I(u_n)\rightarrow 0$, what
is absurd and thus we must have $u_0\ne 0$. Next, we will make
some assertions.\\

\noindent \textbf{Assertion 1.}\quad $u_0>0$ in $\Omega$.\\

\noindent \textit{Proof:} As $(u_n)$ is bounded, up to a
subsequence, $\|u_n\|\rightarrow \rho_0> 0$. Moreover, condition
$I'(u_n)\rightarrow 0$ implies that
\begin{equation}\label{solution}
m(\rho_0^2)\int_\Omega\nabla u_0\nabla v\ \textrm{d}x=\int_\Omega
f(x,u_0)v\ \textrm{d}x,\quad\forall\ v\in W^{1,2}_0(\Omega).
\end{equation}
Taking $v=-u_0^-$, where $u^{\pm}=\max\{\pm u,0\}$, it follows
that $\|u^-\|^2=0$ and so $u=u^+\geq 0$. Using the growth of $f$
and Trudinger-Moser inequality, $f(\cdot,u_0)\in L^p(\Omega)$ for
all $1\leq p<\infty$ and therefore by elliptic regularity $u_0\in
W^{2,p}(\Omega)$ for all $1\leq p<\infty$. Hence, by virtue of
Sobolev imbedding $u_0\in C^{1,\gamma}(\overline{\Omega})$. Now,
if we define $\Omega_0:=\{x\in \Omega:u_0(x)=0\}$ and we suppose
$\Omega_0\ne \emptyset$ then since $f(x,s)\geq 0$ and by applying
a Harnark inequality (see Theorem 8.20 in \cite{livro Trudinger})
we can conclude that $\Omega_0$ is an open and closed of $\Omega$.
The connectedness of $\Omega$ implies $\Omega_0=\Omega$ and so
$u_0\equiv 0$, which is a contradiction.
Thus, we must have $\Omega_0=\emptyset$, i.e., $u_0>0$ in $\Omega$.\\

\noindent \textbf{Assertion 2.}\quad $m(\|u_0\|^2)\|u_0\|^2\geq
\int_\Omega f(x,u_0)u_0\ \textrm{d}x$.\\

\noindent \textit{Proof:} Suppose by contradiction that
$m(\|u_0\|^2)\|u_0\|^2< \int_\Omega f(x,u_0)u_0\ \textrm{d}x$,
that is, $\langle I'(u_0),u_0\rangle<0$. Using $(M_1)$,
\eqref{origem} and Sobolev imbedding, we can see that $\langle
I'(tu_0),u_0\rangle>0$ for $t$ sufficiently small. Thus, there
exists $\sigma\in (0,1)$ such that $\langle I'(\sigma
u_0),u_0\rangle=0$, i.e., $\sigma u_0\in \mathcal{N}$. Thus,
according to $(\widehat{M}_3)$, Lemma \ref{crescente},
semicontinuity of norm and Fatou Lemma we obtain
$$
\begin{aligned}
c_*\leq b\leq I(\sigma u_0)&=I(\sigma u_0)-\frac{1}{4}\langle
I'(\sigma
u_0),\sigma u_0\rangle\\
&=\frac{1}{2}M(\|\sigma u_0\|^2)-\frac{1}{4}m(\|\sigma
u_0\|^2)\|\sigma u_0\|^2\\
&\quad +\frac{1}{4}\int_\Omega[f(x,\sigma
u_0)\sigma u_0-4F(x,\sigma u_0)]\ \textrm{d}x\\
&<\frac{1}{2}M(\| u_0\|^2)-\frac{1}{4}m(\|u_0\|^2)\|u_0\|^2\\
&\quad +\frac{1}{4}\int_\Omega[f(x,u_0) u_0-4F(x,u_0)]\ \textrm{d}x\\
&\leq \liminf_{n\rightarrow\infty}\left[\frac{1}{2}M(\| u_n\|^2)-\frac{1}{4}m(\|u_n\|^2)\|u_n\|^2\right]\\
&\quad +\liminf_{n\rightarrow\infty}\left[\frac{1}{4}\int_\Omega(f(x,u_n) u_n-4F(x,u_n))\ \textrm{d}x\right]\\
&\leq \lim_{n\rightarrow\infty}\left[I( u_n)-\frac{1}{4}\langle
I'(u_n), u_n\rangle\right]=c_*,
\end{aligned}
$$
which is absurd and the assertion is proved.\\

\noindent \textbf{Assertion 3.}\quad $I(u_0)=c_*$.\\

\noindent \textit{Proof:} By using \eqref{ggabriel} and
semicontinuity of norm, we have $I(u_0)\leq c_*$. We are going to
show that the case $I(u_0)< c_*$ can not occur. Indeed, if
$I(u_0)< c_*$ then $\|u_0\|^2<\rho_0^2$. Moreover,
\begin{equation}\label{conv funcional}
\frac{1}{2}M(\rho_0^2)=\lim_{n\rightarrow
\infty}\frac{1}{2}M(\|u_n\|^2)=c_*+\int_{\Omega}F(x,u_0)\textrm{d}x,
\end{equation}
which implies
$\rho_0^2=M^{-1}(2c_*+2\int_{\Omega}F(x,u_0)\textrm{d}x)$. Next,
defining $v_n=u_n/\|u_n\|$ and $v_0=u_0/\rho_0$, we have
$v_n\rightharpoonup v_0$ in $W^{1,2}_0(\Omega)$ and $\|v_0\|<1$.
Thus, by \eqref{Lions-1}
\begin{equation}\label{por Lions}
\sup_{n\in \mathbb{N}}\int_{\Omega}\exp(pv_n^2)\
dx<\infty,\quad\forall\ p<\frac{4\pi}{1-\|v_0\|^2}.
\end{equation}
On the other hand, by Assertion 2, \eqref{conseq1} and Lemma
\ref{crescente} one has
$$
I(u_0)\geq
\frac{1}{2}M(\|u_0\|^2)-\frac{1}{4}m(\|u_0\|^2)\|u_0\|^2
+\frac{1}{4}\int_\Omega[f(x,u_0)u_0-4F(x,u_0)]\textrm{d}x\geq 0.
$$
Using this information together with Lemma \ref{nivel} and the
equality
$$
2c_*-2I(u_0)=M(\rho_0^2)-M(\|u_0\|^2),
$$
where we have used \eqref{conv funcional}, we get
$$
M(\rho_0^2)\leq
2c_*+M(\|u_0\|^2)<M\left(\frac{4\pi}{\alpha_0}\right)+M(\|u_0\|^2)
$$
and therefore by
$(M_1)$
\begin{equation}\label{manga}
\rho_0^2<M^{-1}\left[M\left(\frac{4\pi}{\alpha_0}\right)
+M(\|u_0\|^2)\right]\leq\frac{4\pi}{\alpha_0}+\|u_0\|^2.
\end{equation}
Since
$$
\rho_0^2=\frac{\rho_0^2-\|u_0\|^2}{1-\|v_0\|^2},
$$
from \eqref{manga} it follows that
$$
\rho_0^2<\frac{\frac{4\pi}{\alpha_0}}{1-\|v_0\|^2}.
$$
Thus, there exists $\beta>0$ such that $\alpha_0\|u_n\|^2<\beta<
4\pi/(1-\|v_0\|^2)$ for $n$ sufficiently large. For $q>1$ close to
1 and $\alpha>\alpha_0$ close to $\alpha_0$ we still have
$q\alpha\|u_n\|^2\leq\beta< 4\pi/(1-\|v_0\|^2)$ and invoking
\eqref{por Lions}, for some $C>0$ and $n$ large enough, we
concluded that
$$
\int_{\Omega}\exp(q\alpha u_n^2)\ dx\leq \int_{\Omega}\exp(\beta
v_n^2)\ dx\leq C.
$$
Hence, using \eqref{origem}, $(c)_{\alpha_0}$, H\"older
inequality, \eqref{PTM1} and \eqref{fatos} we reach
$$
\begin{aligned}
\left|\int_{\Omega}f(x,u_{n})(u_n-u_0) \textrm{d}x\right|&\leq
C_1\int_{\Omega}|u_n-u|^2\
\textrm{d}x+C_2\int_{\Omega}|u_n-u_0|\exp(\alpha u_n^2)\ \textrm{d}x\\
&\leq
C_1\|u_n-u_0\|_2^2+C_3\|u_n-u_0\|_{{\frac{q}{q-1}}}\,\longrightarrow
0\quad \textrm{as}\quad n\rightarrow\infty.
\end{aligned}
$$
Since $\langle I'(u_n),u_n-u_0\rangle\rightarrow 0$, it follows
that $m(\|u_n\|^2)\int_{\Omega}\nabla u_n(\nabla u_n-\nabla
u_0)\textrm{d}x\rightarrow 0$. On the other hand,
$$
\begin{aligned}
m(\|u_n\|^2)\int_{\Omega}\nabla u_n(\nabla u_n-\nabla
u_0)\textrm{d}x=&\ m(\|u_n\|^2)\|u_n\|^2-
m(\|u_n\|^2)\int_{\Omega}\nabla u_n\nabla u_0\
\textrm{d}x\\
&\longrightarrow m(\rho_0^2)\rho_0^2-m(\rho_0^2)\|u_0\|^2,
\end{aligned}
$$
which implies that $\rho_0=\|u_0\|$. Thus, $\|u_n\|\rightarrow
\|u_0\|$ and so $u_n\rightarrow u_0$. In view of the continuity of
$I$, we must have $I(u_0)=c_*$ what is an absurd and the assertion
is proved.\\

\noindent\textit{Finalizing the proof of Theorem \ref{teorema1}:}
By Assertion 3 and \eqref{conv funcional} we can see that
$M(\rho_0^2)=M(\|u_0\|^2)$ which shows that $\rho_0^2=\|u_0\|^2$.
Hence, by \eqref{solution} we have
$$
m(\|u_0\|^2)\int_\Omega\nabla u_0\nabla v\ \textrm{d}x=\int_\Omega
f(x,u_0)v\ \textrm{d}x,\quad\forall\ v\in W^{1,2}_0(\Omega),
$$
that is, $u_0$ is a solution of \eqref{P} satisfying $I(u_0)=c_*$
and according to Remark \ref{ground1} and Assertion 1 the proof of
our main result is complete.
\end{proof}

\begin{Remark}
As a matter of fact, by a slight modification of the previous
proof, we can prove that the functional $I$ satisfies the
Palais-Smale condition at any level $c\in \left(-\infty,
\frac{1}{2}M(4\pi/\alpha_0)\right)$.
\end{Remark}

\begin{Remark}
In line with Remark \ref{constante}, we note that if $m(t)\equiv
K$ then automatically the equality holds in the Assertion 1, and
thus we would not need the condition $(f_2)$ to prove Assertion 1.
\end{Remark}

\noindent \textbf{Acknowledgment} Part of this work was done while
the second author was visiting the Universidade Federal do Par\'a
- UFPA. He would like to thank professor Giovany M. Figueiredo for
his hospitality.

\end{document}